\newtheorem{thm}{Theorem}
\newtheorem{cor}[thm]{Corollary}
\newtheorem{defi}[thm]{Definition}
\newtheorem{rem}[thm]{Remark}
\newtheorem{nota}[thm]{Notation}
\newtheorem{exa}[thm]{Example}
\newtheorem{ack}[thm]{Acknowledgement}
\newcommand\be{\begin{equation}}
\newcommand\ee{\end{equation}}
\def\bdefi{\begin{defi}\rm}
\def\edefi{\end{defi}}
\def\bnota{\begin{nota}\rm}
\def\enota{\end{nota}}
\def\brem{\begin{rem}\rm}
\def\erem{\end{rem}}
\def\H{\textup{\textsf{H}}}
\def\RCA{\textup{\textsf{RCA}}}
\def\RCAo{\textup{\textsf{RCA}}_{0}^{\omega}}
\def\ef{\textup{\textsf{ef}}}
\def\ns{\textup{\textsf{ns}}}
\def\WKL{\textup{\textsf{WKL}}}
\def\TOT{\textup{\textsf{TOT}}}
\def\ECF{\textup{\textsf{ECF}}}
\def\T{\mathcal{T}}
\def\bye{\end{document}}
\def\P{\textup{\textsf{P}}}
\def\R{{\mathbb  R}}
\def\MCT{\textup{\textsf{MCT}}}
\def\R{{\mathbb{R}}}
\def\({\textup{(}}
\def\){\textup{)}}
\def\st{\textup{st}}
\def\asa{\leftrightarrow}
\def\di{\rightarrow}
\def\ACA{\textup{\textsf{ACA}}}
\def\paai{\Pi_{1}^{0}\textup{-\textsf{TRANS}}}
\def\QFAC{\textup{\textsf{QF-AC}}}
\def\MU{\textup{\textsf{MU}}}
\def\HAC{\textup{\textsf{HAC}}}
\def\INT{\textup{\textsf{int}}}
\numberwithin{equation}{section}
\numberwithin{thm}{section}
\begin{document}
\title{From Nonstandard Analysis to various flavours of Computability Theory}

\author{Sam Sanders\thanks{This research was supported by FWO Flanders, the John Templeton Foundation, the Alexander von Humboldt Foundation,  LMU Munich (via the Excellence Initiative), and the Japan Society for the Promotion of Science.  The work was done partially while the author was visiting the Institute for Mathematical Sciences, National University of Singapore in 2016. The visit was supported by the Institute.}}
\institute{Munich Center for Mathematical Philosophy, LMU Munich, Germany \& Department of Mathematics, Ghent University, Belgium
\email{sasander@me.com}}

\maketitle

\begin{abstract}
As suggested by the title, it has recently become clear that theorems of \emph{Nonstandard Analysis} (NSA) give rise to theorems in computability theory (no longer involving NSA).   
Now, the aforementioned discipline divides into \emph{classical} and \emph{higher-order} computability theory, where the former (resp.\ the latter) sub-discipline deals with objects of type zero and one (resp.\ of all types).  
The aforementioned results regarding NSA deal exclusively with the \emph{higher-order} case; we show in this paper that theorems of NSA also give rise to theorems in \emph{classical} computability theory by considering so-called \emph{textbook proofs}.  
\end{abstract}
This paper will appear in the proceedings of TAMC2017 (\cite{TAMC})

\section{Introduction}\label{intro}
Computability theory naturally\footnote{The distinction `classical versus higher-order' is not binary as e.g.\ continuous functions on the reals may be represented by type one objects (See e.g.\ \cite{simpson2}*{II.6.1}).} includes two sub-disciplines: \emph{classical} and \emph{higher-order} computability theory.  
The former deals with the computability of objects of types zero and one (natural numbers and sets thereof) and the latter deals with the computability of \emph{higher-order objects}, i.e.\ including objects of type higher than zero and one.
Friedman's closely related foundational program \emph{Reverse Mathematics} (RM for short; see \cite{simpson2,simpson1} for an overview) makes use of \emph{second-order arithmetic} which is also limited to type zero and one objects; Kohlenbach has introduced \emph{higher-order} RM in which all finite types are available (\cite{kohlenbach2}).  

\medskip

As developed in \cite{sambon, samGH, samzoo, samzooII}, one can extract \emph{higher-order} computability results from theorems in Nonstandard Analysis.  These results (\cite{samzoo,samzooII, sambon}) involve the `Big Five' of RM and also the associated `RM zoo' from \cite{damirzoo}, but all results are part of \emph{higher-order} RM.  
The following question thus naturally emerges:  
\begin{quote}
\textsf{(Q)} \emph{Is it possible to obtain \emph{classical} computability theoretic results, including second-order Reverse Mathematics, from} NSA?
\end{quote}
We will provide a positive answer to the question \textsf{(Q)} in this paper by studying an example based on the \emph{monotone convergence theorem} in Section \ref{YY}, after introducing Nonstandard Analysis and an essential fragment in Section \ref{P}.  The notion \emph{textbook proof} plays an important role.      
We also argue that our example generalises to many results in Nonstandard Analysis, as will be explored in \cite{moooore}.  

\medskip

Finally, we stress that our final results in (classical) computability theory are extracted \emph{directly} from \emph{existing theorems} of Nonstandard Analysis \textbf{without} modifications (involving computability theory or otherwise).  In particular, no modification is made to the proofs or theorems in Nonstandard Analysis.  We do consider special proofs in Nonstandard Analysis, which we christen \emph{textbook proofs} due to their format.  One could obtain the same results by mixing Nonstandard Analysis and computability theory, but one of the conceptual goals of our paper is to show that classical computability theory is \emph{already implicit} in Nonstandard Analysis \emph{pur sang}.            

\section{Internal set theory and its fragments}\label{P}
We discuss Nelson's axiomatic Nonstandard Analysis from \cite{wownelly}, and the fragment $\P$ from \cite{brie}.  
The fragment $\P$ is essential to our enterprise due to Corollary~\ref{consresultcor}.  
\subsection{Internal set theory 101}\label{IIST}
In Nelson's \emph{syntactic} (or \emph{axiomatic}) approach to Nonstandard Analysis (\cite{wownelly}), as opposed to Robinson's semantic one (\cite{robinson1}), a new predicate `st($x$)', read as `$x$ is standard' is added to the language of \textsf{ZFC}, the usual foundation of mathematics.  
The notations $(\forall^{\st}x)$ and $(\exists^{\st}y)$ are short for $(\forall x)(\st(x)\di \dots)$ and $(\exists y)(\st(y)\wedge \dots)$.  A formula is called \emph{internal} if it does not involve `st', and \emph{external} otherwise.   
The three external axioms \emph{Idealisation}, \emph{Standard Part}, and \emph{Transfer} govern the new predicate `st';  They are respectively defined\footnote{The superscript `fin' in \textsf{(I)} means that $x$ is finite, i.e.\ its number of elements are bounded by a natural number.} as:  
\begin{enumerate}
\item[\textsf{(I)}] $(\forall^{\st~\textup{fin}}x)(\exists y)(\forall z\in x)\varphi(z,y)\di (\exists y)(\forall^{\st}x)\varphi(x,y)$, for any internal $\varphi$.  
\item[\textsf{(S)}] $(\forall^{\st} x)(\exists^{\st}y)(\forall^{\st}z)\big((z\in x\wedge \varphi(z))\asa z\in y\big)$, for any $\varphi$.
\item[\textsf{(T)}] $(\forall^{\st}t)\big[(\forall^{\st}x)\varphi(x, t)\di (\forall x)\varphi(x, t)\big]$, where $\varphi(x,t)$ is internal, and only has free variables $t, x$.  
\end{enumerate}
The system \textsf{IST} is (the internal system) \textsf{ZFC} extended with the aforementioned external axioms;  
The former is a conservative extension of \textsf{ZFC} for the internal language, as proved in \cite{wownelly}.    

\medskip

In \cite{brie}, the authors study fragments of \textsf{IST} based on Peano and Heyting arithmetic.  
In particular, they consider the systems $\H$ and $\P$, introduced in the next section, which are conservative extensions of the (internal) logical systems \textsf{E-HA}$^{\omega}$ and $\textsf{E-PA}^{\omega}$, respectively \emph{Heyting and Peano arithmetic in all finite types and the axiom of extensionality}.       
We refer to \cite{kohlenbach3}*{\S3.3} for the exact definitions of the (mainstream in mathematical logic) systems \textsf{E-HA}$^{\omega}$ and $\textsf{E-PA}^{\omega}$.  
Furthermore, \textsf{E-PA}$^{\omega*}$ and $\textsf{E-HA}^{\omega*}$ are the definitional extensions of \textsf{E-PA}$^{\omega}$ and $\textsf{E-HA}^{\omega}$ with types for finite sequences, as in \cite{brie}*{\S2}.  For the former, we require some notation.  
\begin{nota}[Finite sequences]\label{skim}\rm
The systems $\textsf{E-PA}^{\omega*}$ and $\textsf{E-HA}^{\omega*}$ have a dedicated type for `finite sequences of objects of type $\rho$', namely $\rho^{*}$.  Since the usual coding of pairs of numbers goes through in both, we shall not always distinguish between $0$ and $0^{*}$.  
Similarly, we do not always distinguish between `$s^{\rho}$' and `$\langle s^{\rho}\rangle$', where the former is `the object $s$ of type $\rho$', and the latter is `the sequence of type $\rho^{*}$ with only element $s^{\rho}$'.  The empty sequence for the type $\rho^{*}$ is denoted by `$\langle \rangle_{\rho}$', usually with the typing omitted.  

\medskip

Furthermore, we denote by `$|s|=n$' the length of the finite sequence $s^{\rho^{*}}=\langle s_{0}^{\rho},s_{1}^{\rho},\dots,s_{n-1}^{\rho}\rangle$, where $|\langle\rangle|=0$, i.e.\ the empty sequence has length zero.
For sequences $s^{\rho^{*}}, t^{\rho^{*}}$, we denote by `$s*t$' the concatenation of $s$ and $t$, i.e.\ $(s*t)(i)=s(i)$ for $i<|s|$ and $(s*t)(j)=t(|s|-j)$ for $|s|\leq j< |s|+|t|$. For a sequence $s^{\rho^{*}}$, we define $\overline{s}N:=\langle s(0), s(1), \dots,  s(N)\rangle $ for $N^{0}<|s|$.  
For $\alpha^{0\di \rho}$, we also write $\overline{\alpha}N=\langle \alpha(0), \alpha(1),\dots, \alpha(N)\rangle$ for \emph{any} $N^{0}$.  By way of shorthand, $q^{\rho}\in Q^{\rho^{*}}$ abbreviates $(\exists i<|Q|)(Q(i)=_{\rho}q)$.  Finally, we shall use $\underline{x}, \underline{y},\underline{t}, \dots$ as short for tuples $x_{0}^{\sigma_{0}}, \dots x_{k}^{\sigma_{k}}$ of possibly different type $\sigma_{i}$.          
\end{nota}    
%
%
\subsection{The classical system $\P$}\label{PIPI}
In this section, we introduce $\P$, a conservative extension of $\textsf{E-PA}^{\omega}$ with fragments of Nelson's $\IST$.  
We first introduce the system $\textsf{E-PA}_{\st}^{\omega*}$ using the definition from \cite{brie}*{Def.~6.1}.  Recall that \textsf{E-PA}$^{\omega*}$ is the definitional extension of \textsf{E-PA}$^{\omega}$ with types for finite sequences as in \cite{brie}*{\S2} and Notation \ref{skim}.  
The language of $\textsf{E-PA}_{\st}^{\omega*}$ is that of $\textsf{E-PA}^{\omega*}$ extended with new symbols $\st_{\sigma}$ for any finite type $\sigma$ in $\textsf{E-PA}^{\omega*}$.  
\begin{nota}\label{notawin}\rm
We write $(\forall^{\st}x^{\tau})\Phi(x^{\tau})$ and $(\exists^{\st}x^{\sigma})\Psi(x^{\sigma})$ for 
$(\forall x^{\tau})\big[\st(x^{\tau})\di \Phi(x^{\tau})\big]$ and $(\exists x^{\sigma})\big[\st(x^{\sigma})\wedge \Psi(x^{\sigma})\big]$.     
A formula $A$ is `internal' if it does not involve `$\st$', and external otherwise.   The formula $A^{\st}$ is defined from internal $A$ by appending `st' to all quantifiers (except bounded number quantifiers).    
\end{nota}
The set $\T^{*}$ is defined as the collection of all the terms in the language of $\textsf{E-PA}^{\omega*}$.  
\bdefi\label{debs}
The system $ \textsf{E-PA}^{\omega*}_{\st} $ is defined as $ \textsf{E-PA}^{\omega{*}} + \T^{*}_{\st} + \textsf{IA}^{\st}$, where $\T^{*}_{\st}$
consists of the following axiom schemas.
\begin{enumerate}
\item The schema\footnote{The language of $\textsf{E-PA}_{\st}^{\omega*}$ contains a symbol $\st_{\sigma}$ for each finite type $\sigma$, but the subscript is essentially always omitted.  Hence $\T^{*}_{\st}$ is an \emph{axiom schema} and not an axiom.\label{omit}} $\st(x)\wedge x=y\di\st(y)$,
\item The schema providing for each closed\footnote{A term is called \emph{closed} in \cite{brie} (and in this paper) if all variables are bound via lambda abstraction.  Thus, if $\underline{x}, \underline{y}$ are the only variables occurring in the term $t$, the term $(\lambda \underline{x})(\lambda\underline{y})t(\underline{x}, \underline{y})$ is closed while $(\lambda \underline{x})t(\underline{x}, \underline{y})$ is not.  The second axiom in Definition \ref{debs} thus expresses that $\st_{\tau}\big((\lambda \underline{x})(\lambda\underline{y})t(\underline{x}, \underline{y})\big)$ if $(\lambda \underline{x})(\lambda\underline{y})t(\underline{x}, \underline{y})$ is of type $\tau$.\label{kootsie}} term $t\in \T^{*}$ the axiom $\st(t)$.
\item The schema $\st(f)\wedge \st(x)\di \st(f(x))$.
\end{enumerate}
The external induction axiom \textsf{IA}$^{\st}$ states that for any (possibly external) $\Phi$:
\be\tag{\textsf{IA}$^{\st}$}
\Phi(0)\wedge(\forall^{\st}n^{0})(\Phi(n) \di\Phi(n+1))\di(\forall^{\st}n^{0})\Phi(n).     
\ee
\edefi
Secondly, we introduce some essential fragments of $\IST$ studied in \cite{brie}.  
\bdefi[External axioms of $\P$]~
\begin{enumerate}
\item$\HAC_{\INT}$: For any internal formula $\varphi$, we have
\be\label{HACINT}
(\forall^{\st}x^{\rho})(\exists^{\st}y^{\tau})\varphi(x, y)\di \big(\exists^{\st}F^{\rho\di \tau^{*}}\big)(\forall^{\st}x^{\rho})(\exists y^{\tau}\in F(x))\varphi(x,y),
\ee
\item $\textsf{I}$: For any internal formula $\varphi$, we have
\[
(\forall^{\st} x^{\sigma^{*}})(\exists y^{\tau} )(\forall z^{\sigma}\in x)\varphi(z,y)\di (\exists y^{\tau})(\forall^{\st} x^{\sigma})\varphi(x,y), 
\]
\item The system $\P$ is $\textsf{E-PA}_{\st}^{\omega*}+\textsf{I}+\HAC_{\INT}$.
\end{enumerate}
\end{defi}
Note that \textsf{I} and $\HAC_{\INT}$ are fragments of Nelson's axioms \emph{Idealisation} and \emph{Standard part}.  
By definition, $F$ in \eqref{HACINT} only provides a \emph{finite sequence} of witnesses to $(\exists^{\st}y)$, explaining its name \emph{Herbrandized Axiom of Choice}.   

\medskip

The system $\P$ is connected to $\textsf{E-PA}^{\omega}$ by the following theorem.    
Here, the superscript `$S_{\st}$' is the syntactic translation defined in \cite{brie}*{Def.\ 7.1}.  
\begin{thm}\label{consresult}
Let $\Phi(\tup a)$ be a formula in the language of \textup{\textsf{E-PA}}$^{\omega*}_{\st}$ and suppose $\Phi(\tup a)^\Sh\equiv\forallst \tup x \, \existsst \tup y \, \varphi(\tup x, \tup y, \tup a)$. If $\Delta_{\intern}$ is a collection of internal formulas and
\be\label{antecedn}
\P + \Delta_{\intern} \vdash \Phi(\tup a), 
\ee
then one can extract from the proof a sequence of closed\footnote{Recall the definition of closed terms from \cite{brie} as sketched in Footnote \ref{kootsie}.\label{kootsie2}} terms $t$ in $\mathcal{T}^{*}$ such that
\be\label{consequalty}
\textup{\textsf{E-PA}}^{\omega*} + \Delta_{\intern} \vdash\  \forall \tup x \, \exists \tup y\in \tup t(\tup x)\ \varphi(\tup x,\tup y, \tup a).
\ee
\end{thm}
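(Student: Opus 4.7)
The plan is to proceed by induction on the length of the derivation of $\Phi(\tup a)$ in $\P+\Delta_{\intern}$, maintaining as invariant that at each stage we have a tuple of closed terms of $\mathcal{T}^{*}$ realizing the $S_{\st}$-translation of the current line. The crucial observation is that for internal $A$ one has $A^{\Sh}\equiv A$, so the axioms of $\textsf{E-PA}^{\omega*}$ and the hypotheses in $\Delta_{\intern}$ translate to themselves and are trivially realized by identity. This is exactly what allows $\Delta_{\intern}$ to appear unchanged on the right-hand side of \eqref{consequalty} and what keeps the conclusion purely internal.

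For the basic axioms of $\T^{*}_{\st}$ listed in Definition~\ref{debs}, the realizers are built directly from identity and the term constructors of $\mathcal{T}^{*}$; closedness of terms in the second axiom (see Footnote~\ref{kootsie}) is exactly what ensures the associated realizer is itself a closed term. The axiom $\HAC_{\INT}$ essentially provides its own realizer, since the finite-sequence-valued functional $F$ witnessing the existential is precisely the Herbrandized bound one needs. Idealisation $\textsf{I}$ also admits a largely trivial realizer in this Shoenfield-style setting, because its outer standard quantifier already ranges over finite sequences $x^{\sigma^{*}}$, so the translation collapses once finite-sequence coding (Notation~\ref{skim}) is folded in.

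The technical heart of the argument is the external induction axiom $\textsf{IA}^{\st}$: from realizers for the base case $\Phi(0)$ and for the inductive step $(\forall^{\st}n)(\Phi(n)\di\Phi(n+1))$ one must produce a realizer for $(\forall^{\st}n)\Phi(n)$. This is achieved by primitive recursion on $n$ at the appropriate higher type, concatenating the finite sequences of witnesses produced at each iteration to obtain the uniform tuple $\tup t(\tup x)$ required in \eqref{consequalty}. The logical rules are then handled by composition in the usual way: modus ponens substitutes realizers of the antecedent into realizers of the implication, while generalisation and the structural rules leave them essentially unchanged.

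The main obstacle will not be any individual axiom but the bookkeeping needed to keep all realizers \emph{closed} and uniformly definable in $\mathcal{T}^{*}$ throughout the induction, particularly at the interaction of external induction with $\HAC_{\INT}$, where one must merge Herbrand-style finite tuples of witnesses across recursive stages without introducing free variables. Once this is controlled, reading off the desired sequence $\tup t$ from the final realizer of $\Phi(\tup a)^{\Sh}$ yields \eqref{consequalty}. The full verification runs parallel to the soundness theorem for the $S_{\st}$-translation of $\P$ in \cite{brie}.
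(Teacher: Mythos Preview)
Your sketch is broadly correct: an induction on the length of the derivation, verifying that each axiom and rule of $\P$ admits closed realizing terms for its $S_{\st}$-translation, is precisely how the soundness theorem in \cite{brie}*{\S7} is established. The observation that internal formulas (hence all of $\textsf{E-PA}^{\omega*}$ and $\Delta_{\intern}$) are fixed by the translation, the use of higher-type primitive recursion with sequence concatenation for $\textsf{IA}^{\st}$, and the bookkeeping concern about keeping realizers closed are all on target. Some of your axiom cases are rather loose---in particular, the treatment of $\textsf{I}$ and of modus ponens is more delicate than ``largely trivial'' or ``composition in the usual way'' suggests---but the overall architecture is right.

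The paper, however, does not reprove any of this: its entire proof is the one-line citation ``Immediate by \cite{brie}*{Theorem 7.7}.'' The theorem is imported wholesale as a black box, since the contribution of the present paper lies downstream (Corollary~\ref{consresultcor} and its applications), not in the soundness theorem itself. Your approach makes the paper more self-contained at the cost of duplicating a nontrivial argument already carried out in full in \cite{brie}; the paper's approach keeps the focus on what is new. Since your final sentence acknowledges that the verification ``runs parallel to the soundness theorem \dots\ in \cite{brie},'' you might simply replace the sketch by that citation.
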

\begin{proof}
Immediate by \cite{brie}*{Theorem 7.7}.  
\end{proof}
The proofs of the soundness theorems in \cite{brie}*{\S5-7} provide an algorithm $\mathcal{A}$ to obtain the term $t$ from the theorem.  In particular, these terms 
can be `read off' from the nonstandard proofs.    

\medskip

In light of \cite{sambon}, the following corollary (which is not present in \cite{brie}) is essential to our results.  Indeed, the following corollary expresses that we may obtain effective results as in \eqref{effewachten} from any theorem of Nonstandard Analysis which has the same form as in \eqref{bog}.  It was shown in \cite{sambon, samzoo, samzooII} that the scope of this corollary includes the Big Five systems of RM and the RM `zoo' (\cite{damirzoo}).  
\begin{cor}\label{consresultcor}
If $\Delta_{\intern}$ is a collection of internal formulas and $\psi$ is internal, and
\be\label{bog}
\P + \Delta_{\intern} \vdash (\forall^{\st}\underline{x})(\exists^{\st}\underline{y})\psi(\underline{x},\underline{y}, \underline{a}), 
\ee
then one can extract from the proof a sequence of closed$^{\ref{kootsie2}}$ terms $t$ in $\mathcal{T}^{*}$ such that
\be\label{effewachten}
\textup{\textsf{E-PA}}^{\omega*} +\QFAC^{1,0}+ \Delta_{\intern} \vdash (\forall \underline{x})(\exists \underline{y}\in t(\underline{x}))\psi(\underline{x},\underline{y},\underline{a}).
\ee
\end{cor}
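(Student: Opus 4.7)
The approach is to apply the soundness Theorem~\ref{consresult} directly to the specific formula $\Phi(\underline{a}) := (\forall^{\st}\underline{x})(\exists^{\st}\underline{y})\psi(\underline{x},\underline{y},\underline{a})$ that appears in the hypothesis~\eqref{bog}. The only substantive step is computing its translation $\Phi^{S_{\st}}$ from \cite{brie}*{Def.~7.1}. Because $\psi$ is internal we have $\psi^{S_{\st}}\equiv\psi$, and the clauses of the translation for the two standard quantifiers leave $\forall^{\st}\underline{x}$ in place and Herbrandise $\exists^{\st}\underline{y}$, producing
\[
\Phi^{S_{\st}}\;\equiv\;(\forall^{\st}\underline{x})(\exists^{\st}\underline{Y})\,(\exists \underline{y}\in \underline{Y})\,\psi(\underline{x},\underline{y},\underline{a}),
\]
whose matrix is again internal. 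Hence $\Phi$ is already in the normal form to which Theorem~\ref{consresult} directly applies.

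Next I would invoke Theorem~\ref{consresult} on~\eqref{bog} to obtain closed terms $\underline{s}\in\mathcal{T}^{*}$ with
\[
\textup{\textsf{E-PA}}^{\omega*}+\Delta_{\intern}\ \vdash\ (\forall \underline{x})(\exists \underline{Y}\in \underline{s}(\underline{x}))(\exists \underline{y}\in \underline{Y})\,\psi(\underline{x},\underline{y},\underline{a}).
\]
The final step is essentially cosmetic: I would let $t(\underline{x})$ be the (closed) term that flattens the nested finite list $\underline{s}(\underline{x})$ of finite sequences into a single finite sequence, thereby collapsing the two bounded existentials into the single $(\exists \underline{y}\in t(\underline{x}))$ required by~\eqref{effewachten}.

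The role of $\QFAC^{1,0}$ in the target system is to glue together the component witnesses when $\underline{y}$ is a tuple of variables of different, possibly higher, finite types: the translation then delivers one finite sequence of witnesses per component of $\underline{y}$, and $\QFAC^{1,0}$ is what licenses the quantifier-free extraction of a single joint witness from the product of these sequences. Everything else is pure bookkeeping. I expect the only non-mechanical part to be tracking the translation carefully enough to verify that its matrix is genuinely internal (so that Theorem~\ref{consresult} is literally applicable) and that the returned terms $\underline{s}$ have the product types required by the flattening step; once this is in place, the corollary reduces to a single citation of Theorem~\ref{consresult}.
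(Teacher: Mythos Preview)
Your approach is essentially the paper's: compute $\Phi^{S_{\st}}$ for $\Phi\equiv(\forall^{\st}\underline{x})(\exists^{\st}\underline{y})\psi$ and invoke Theorem~\ref{consresult}. The only cosmetic difference is that the paper asserts $\Phi^{S_{\st}}\equiv\Phi$ outright (referring to the clauses of \cite{brie}*{Def.~7.1}), whereas you display the Herbrandised intermediate $(\forall^{\st}\underline{x})(\exists^{\st}\underline{Y})(\exists \underline{y}\in\underline{Y})\psi$ and then flatten; these are the same up to the trivial equivalence $\exists^{\st}\underline{Y}\,(\exists\underline{y}\in\underline{Y})\psi\leftrightarrow\exists^{\st}\underline{y}\,\psi$, so no substantive divergence.

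One point to correct: your explanation of $\QFAC^{1,0}$ is not right. Combining finitely many finite sequences of witnesses---one per component of $\underline{y}$---into a single finite sequence of tuples is a purely term-level operation in $\textsf{E-PA}^{\omega*}$ (iterated concatenation/pairing over a bounded index set); no choice principle is required for that. The paper's own proof does not invoke $\QFAC^{1,0}$ at all, and indeed the flattening you describe goes through in $\textsf{E-PA}^{\omega*}$ alone. The appearance of $\QFAC^{1,0}$ in the target system of \eqref{effewachten} is harmless (it only enlarges the theory) but is not doing the work you attribute to it; you should simply drop that paragraph.
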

\begin{proof}
Clearly, if for internal $\psi$ and $\Phi(\underline{a})\equiv (\forall^{\st}\underline{x})(\exists^{\st}\underline{y})\psi(x, y, a)$, we have $[\Phi(\underline{a})]^{S_{\st}}\equiv \Phi(\underline{a})$, then the corollary follows immediately from the theorem.  
A tedious but straightforward verification using the clauses (i)-(v) in \cite{brie}*{Def.\ 7.1} establishes that indeed $\Phi(\underline{a})^{S_{\st}}\equiv \Phi(\underline{a})$.  
\end{proof}
For the rest of this paper, the notion `normal form' shall refer to a formula as in \eqref{bog}, i.e.\ of the form $(\forall^{\st}x)(\exists^{\st}y)\varphi(x,y)$ for $\varphi$ internal.  

\medskip

Finally, we will use the usual notations for natural, rational and real numbers and functions as introduced in \cite{kohlenbach2}*{p.\ 288-289}. (and \cite{simpson2}*{I.8.1} for the former).  
We only list the definition of real number and related notions in $\P$.
\begin{defi}[Real numbers and related notions in $\P$]\label{keepintireal}\rm~
\begin{enumerate}
\item A (standard) real number $x$ is a (standard) fast-converging Cauchy sequence $q_{(\cdot)}^{1}$, i.e.\ $(\forall n^{0}, i^{0})(|q_{n}-q_{n+i})|<_{0} \frac{1}{2^{n}})$.  
We use Kohlenbach's `hat function' from \cite{kohlenbach2}*{p.\ 289} to guarantee that every sequence $f^{1}$ is a real.  
\item We write $[x](k):=q_{k}$ for the $k$-th approximation of a real $x^{1}=(q^{1}_{(\cdot)})$.    
\item Two reals $x, y$ represented by $q_{(\cdot)}$ and $r_{(\cdot)}$ are \emph{equal}, denoted $x=_{\R}y$, if $(\forall n)(|q_{n}-r_{n}|\leq \frac{1}{2^{n}})$. Inequality $<_{\R}$ is defined similarly.         
\item We  write $x\approx y$ if $(\forall^{\st} n)(|q_{n}-r_{n}|\leq \frac{1}{2^{n}})$ and $x\gg y$ if $x>y\wedge x\not\approx y$.  
\item A function $F:\R\di \R$ mapping reals to reals is represented by $\Phi^{1\di 1}$ mapping equal reals to equal reals as in $(\forall x, y)(x=_{\R}y\di \Phi(x)=_{\R}\Phi(y))$.
\item We write `$N\in \Omega$' as a \emph{symbolic} abbreviation for $\neg\st(N^{0})$.  
\end{enumerate}
\end{defi}
\section{Main results}\label{YY}
In this section, we provide an answer to the question \textsf{(Q)} from Section~\ref{intro} by studying the \emph{monotone convergence theorem}.  
We first obtain the associated result in higher-order computability theory from NSA in Section \ref{EXA}.  
We then establish in Section \ref{post} that the same proof in NSA also gives rise to \emph{classical} computability theory.  

\subsection{An example of the computational content of NSA}\label{EXA}
In this section, we provide an example of the \emph{higher-order} computational content of NSA, involving the \emph{monotone convergence theorem}, $\MCT$ for short, which is the statement \emph{every monotone sequence in the unit interval converges}.  
In particular, we consider the equivalence between a nonstandard version of $\MCT$ and a fragment of Nelson's axiom \emph{Transfer} from Section \ref{P}.
From this nonstandard equivalence, an explicit RM equivalence involving higher-order versions of $\MCT$ and arithmetical comprehension is extracted as in \eqref{frood}.  

\medskip

Firstly, nonstandard $\MCT$ (involving \emph{nonstandard} convergence) is:
\be\label{MCTSTAR}\tag{\MCT$_{\textsf{ns}}$}
(\forall^{\st} c_{(\cdot)}^{0\di 1})\big[(\forall n^{0})(c_{n}\leq c_{n+1}\leq 1)\di (\forall N,M\in \Omega)[c_{M}\approx c_{N}]    \big].
\ee
while the effective (or `uniform') version of $\MCT$, abbreviated \MCT$_{\textsf{ef}}(t)$, is:
\[\textstyle
(\forall c_{(\cdot)}^{0\di 1},k^{0})\big[(\forall n^{0})(c_{n}\leq c_{n+1}\leq 1)\di (\forall N,M\geq t(c_{(\cdot)})(k))[|c_{M}- c_{N}|\leq \frac{1}{k} ]   \big].
\]
We require two equivalent (\cite{kohlenbach2}*{Prop.\ 3.9}) versions of arithmetical comprehension, respectively the \emph{Turing jump functional} and \emph{Feferman's mu-operator}, as follows  
\be\label{mukio}\tag{$\exists^{2}$}
(\exists \varphi^{2})\big[(\forall f^{1})( (\exists n^{0})f(n)=0 \asa \varphi(f)=0    \big],
\ee
\be\label{mu}\tag{$\mu^{2}$}
(\exists \mu^{2})\big[(\forall f^{1})( (\exists n^{0})f(n)=0 \di f(\mu(f))=0)    \big],
\ee
and also the restriction of Nelson's axiom \emph{Transfer} as follows:
\be\tag{$\paai$}
(\forall^{\st}f^{1})\big[(\forall^{\st}n^{0})f(n)\ne0\di (\forall m)f(m)\ne0\big].
\ee
Denote by $\textsf{MU}(\mu)$ the formula in square brackets in \eqref{mu}.  
We have the following nonstandard equivalence.  
\begin{thm}\label{partje}
The system $\P$ proves that $\paai\asa \MCT_{\ns}$.  
\end{thm}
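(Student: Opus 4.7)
The plan is to establish both implications of the equivalence, using the contrapositive of $\paai$---i.e., the $\Sigma_{1}^{0}$-transfer principle $(\forall^{\st}f^{1})[(\exists m)f(m)=0\to(\exists^{\st}n)f(n)=0]$, available in $\P$ by classical logic---as the main bridge in the forward direction.

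For the reverse direction $\MCT_{\ns}\to\paai$, I would use an indicator-sequence trick. Let $f^{1}$ be standard with $(\forall^{\st}n)f(n)\neq 0$; suppose for contradiction that some $m_{0}$ has $f(m_{0})=0$, and take $m_{0}$ to be the least such zero (available by classical logic in $\P$). Since $0$ is standard we obtain $m_{0}\geq 1$, and both $m_{0}$ and $m_{0}-1$ lie in $\Omega$ by closure of the standard numbers under successor. Define the nondecreasing sequence $c_{n}:=1$ if $(\exists i\leq n)(f(i)=0)$ and $c_{n}:=0$ otherwise; as a primitive recursive functional of the standard $f$, $c$ is itself standard and takes values in $\{0,1\}\subseteq[0,1]$. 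By minimality of $m_{0}$ we have $c_{m_{0}-1}=0$ and $c_{m_{0}}=1$, which directly contradicts $\MCT_{\ns}$ applied to $c$.

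For the forward direction $\paai\to\MCT_{\ns}$, fix standard nondecreasing $c_{(\cdot)}$ with $c_{n}\leq 1$ and a standard $k^{0}$; I will produce a standard $n_{0}$ with $c_{m}-c_{n_{0}}\leq 1/2^{k+1}$ for all $m\geq n_{0}$. Granting this, monotonicity yields $|c_{M}-c_{N}|\leq 1/2^{k}$ for any $M,N\in\Omega$ (both exceeding the standard $n_{0}$), and letting $k$ range over the standard naturals gives $c_{M}\approx c_{N}$. To locate $n_{0}$ I would exploit a dyadic partition of $[c_{0},1]$: for each integer $j$ let $\varphi(j):\equiv(\exists n)(c_{n}>j/2^{k+1})$, an internal $\Sigma_{1}^{0}$ formula with standard parameters. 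Then $\varphi(\lfloor c_{0}\cdot 2^{k+1}\rfloor-1)$ holds trivially while $\varphi(2^{k+1})$ fails (since $c_{n}\leq 1$), so external induction ($\textsf{IA}^{\st}$) over the resulting finite standard range of $j$'s selects the largest standard $j^{*}$ at which $\varphi(j^{*})$ holds. Applying $\Sigma_{1}^{0}$-transfer to $\varphi(j^{*})$ produces a standard witness $n_{0}$ with $c_{n_{0}}>j^{*}/2^{k+1}$, while the failure of $\varphi(j^{*}+1)$ yields the internal bound $(\forall m)(c_{m}\leq(j^{*}+1)/2^{k+1})$. For $m\geq n_{0}$ monotonicity then forces $c_{n_{0}}\leq c_{m}\leq(j^{*}+1)/2^{k+1}$, whence $c_{m}-c_{n_{0}}\leq 1/2^{k+1}$ as required.

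The main obstacle is the forward direction: the naive target ``$(\exists n)(\forall m\geq n)(c_{m}-c_{n}\leq 1/2^{k})$'' is $\Pi_{2}^{0}$, outstripping the $\Sigma_{1}^{0}$-transfer that $\paai$ affords directly. The dyadic trick bypasses this by splitting the task into a $\Sigma_{1}^{0}$-witness (for a near-supremum) and a $\Pi_{1}^{0}$-upper bound (extracted from the \emph{maximality} of the discrete parameter $j^{*}$), both living in a standardly finite index set where $\textsf{IA}^{\st}$ suffices. A secondary technicality is that internal monotone convergence must be available inside $\textsf{E-PA}^{\omega*}$, but this follows from the elementary ``after at most $2^{k}$ jumps of size $>1/2^{k}$, no further such jump occurs'' counting argument, provable with bounded induction.
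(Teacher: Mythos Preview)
Your reverse implication matches the paper's (it uses $\sum_{i\leq k}2^{-i}$ in place of your constant $1$ once a zero of $f$ has appeared, a cosmetic difference, and derives the same contradiction $c_{m_{0}-1}\not\approx c_{m_{0}}$ at the least zero $m_{0}\in\Omega$); the real divergence is in the forward direction. The paper argues by contradiction: from $c_{N_{0}}\not\approx c_{M_{0}}$ it extracts a standard $k_{0}$ with $(\forall^{\st}n)(\exists N,M\geq n)(|c_{N}-c_{M}|\geq1/k_{0})$, applies $\Sigma_{1}^{0}$-transfer pointwise in $n$, and then invokes $\HAC_{\INT}$ to bundle the standard witnesses into a standard functional whose $(k_{0}{+}1)$-fold iteration forces the sequence out of $[0,1]$. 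Your dyadic-level argument is more direct and avoids $\HAC_{\INT}$ entirely: a single application of transfer to $\varphi(j^{*})$ produces the standard near-supremum index, while the uniform tail bound comes for free from the failure of $\varphi(j^{*}+1)$. Two small points on your write-up: $\textsf{IA}^{\st}$ is not really needed to select $j^{*}$, since $\varphi$ is internal and the search range is bounded by standard integers (ordinary bounded maximisation suffices, and the maximiser is automatically standard); and $\lfloor c_{0}\cdot 2^{k+1}\rfloor$ is not literally a term of the language (floor of a real is discontinuous), though any standard integer below $c_{0}\cdot 2^{k+1}$ serves as the lower endpoint. What the paper's proof buys is the downstream ``textbook'' extraction in Theorem~\ref{tochwelcrux}, which requires that the instances of $\paai$ actually invoked be given by a \emph{term} in $c_{(\cdot)}$ and a type-zero parameter; since your $j^{*}$ is determined by a $\Sigma_{1}^{0}$-maximisation rather than a term, your proof would need a mild reorganisation (apply transfer to every $j$ in the standard finite range and then maximise) before it could feed that later machinery.
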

\begin{proof}
For the forward implication, assume $\paai$ and suppose $\MCT_{\ns}$ is false, i.e.\ there is a standard monotone sequence $c_{(\cdot)}$ such that $c_{N_{0}}\not\approx c_{M_{0}}$ for fixed nonstandard $N_{0}, M_{0}$.
The latter is by definition $|c_{N_{0}}- c_{M_{0}}|\geq \frac{1}{k_{0}}$, where $k_{0}^{0}$ is a fixed standard number. 
Since $N_{0}, M_{0}$ are nonstandard in the latter, we have $(\forall^{\st}n)(\exists N, M\geq n)
(|c_{N}- c_{M}|\geq \frac{1}{k_{0}})$.  Fix standard $n^{0}$ in the latter and note that the resulting $\Sigma_{1}^{0}$-formula only involves \emph{standard} parameters.
Hence, applying the contraposition of $\paai$, we obtain $(\forall^{\st}n)(\exists^{\st} N, M\geq n)(|c_{N}- c_{M}|\geq \frac{1}{k_{0}})$.  
Applying\footnote{To `apply this formula $k_{0}+1$ times', apply $\HAC_{\INT}$ to $(\forall^{\st}n)(\exists^{\st} N, M\geq n)(|c_{N}- c_{M}|\geq \frac{1}{k_{0}})$ to obtain standard $F^{0\di 0^{*}}$ and define $G(n)$ as the maximum of $F(n)(i)$ for $i<|F(n)|$.  Then   
$(\forall^{\st}n)(\exists  N, M\geq n)(N, M\leq G(n)\wedge |c_{N}- c_{M}|\geq \frac{1}{k_{0}})$ and iterate the functional $G$ at least $k_{0}+1$ times to obtain the desired contradiction.} the previous formula $k_{0}+1$ times would make $c_{(\cdot)}$ escape the unit interval, a contradiction; $\MCT_{\ns}$ follows and the forward implication holds.    

\medskip
\noindent
For the reverse implication, assume $\MCT_{\ns}$, fix standard $f^{1}$ such that $(\forall^{\st}n^{0})(f(n)\ne 0)$ and define the sequence $c_{(\cdot)}^{1}$ as follows: $c_{k}$ is $0$ if $(\forall n\leq k)(f(n)\ne 0)$ and $\sum_{i=1}^{k}\frac{1}{2^{i}}$ otherwise.  Note that $c_{(\cdot)}$ is standard (as $f^{1}$ is) and weakly increasing.  Hence, $c_{N}\approx c_{M}$ for nonstandard $N,M$ by $\MCT_{\ns}$.  
Now suppose $m_{0}$ is such that $f(m_{0})=0$ and also the least such number.  By the definition of $c_{(\cdot)}$, we have $0=c_{m_{0}-1}\not\approx c_{m_{0}}=\sum_{i=1}^{m_{0}}\frac{1}{2^{i}}\approx 1$.  This contradiction implies that $(\forall n^{0})(f(n)\ne 0)$, and $\paai $ thus follows.  \qed
\end{proof}
We refer to the previous proof as the `textbook proof' of $\MCT_{\ns}\asa \paai$.  The reverse implication is indeed very similar to the proof of $\MCT\di \ACA_{0}$ in Simpson's textbook on RM, as found in \cite{simpson2}*{I.8.4}.  This `textbook proof' is special in a specific sense, as will become clear in the next section.     
Nonetheless, \textbf{any} nonstandard proof will yield higher-order computability results as in \eqref{frood}.  
\begin{thm}\label{sef}
From \textbf{any} proof of $\MCT_{\ns}\asa \paai$ in $\P $, two terms $s, u$ can be extracted such that $\textup{\textsf{E-PA}}^{\omega*}$ proves:
\be\label{frood}
(\forall \mu^{2})\big[\textsf{\MU}(\mu)\di \MCT_{\ef}(s(\mu)) \big] \wedge (\forall t^{1\di 1})\big[ \MCT_{\ef}(t)\di  \MU(u(t))  \big].
\ee
\end{thm}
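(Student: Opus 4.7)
The plan is to apply Corollary~\ref{consresultcor} separately to each of the two implications making up $\MCT_{\ns}\asa\paai$, once each has been rewritten in the normal form $(\forall^{\st}\underline{x})(\exists^{\st}\underline{y})\,\psi$ with $\psi$ internal. The two closed terms so extracted will serve, after a routine tidying of the resulting finite sequences of candidate witnesses, as the required $s$ and $u$.

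First I would rewrite both sides of the equivalence in normal form inside $\P$. Classically, $\paai$ is equivalent to $(\forall^{\st}f^{1})(\exists^{\st}n^{0})[f(n)=0\vee(\forall m)f(m)\neq 0]$, whose matrix is internal. For $\MCT_{\ns}$, the inner assertion ``$(\forall N,M\in\Omega)[c_{N}\approx c_{M}]$'' expands to $(\forall^{\st}k)(\forall N,M)[\neg\st(N)\wedge\neg\st(M)\to|c_{N}-c_{M}|\leq 1/2^{k}]$, and a standard underspill argument using $\I$ shows that, for a standard monotone sequence $c$ in $[0,1]$, this is equivalent to $(\forall^{\st}k)(\exists^{\st}n)(\forall N,M\geq n)[|c_{N}-c_{M}|\leq 1/2^{k}]$. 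Hence $\MCT_{\ns}$ is $\P$-equivalent to the normal form
\[
(\forall^{\st}c^{0\to 1}_{(\cdot)},k^{0})(\exists^{\st}n^{0})\bigl[(\forall m^{0})(c_{m}\leq c_{m+1}\leq 1)\to(\forall N,M\geq n)(|c_{N}-c_{M}|\leq 1/2^{k})\bigr].
\]

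For the reverse direction, unfolding $\MCT_{\ns}\to\paai$ using the normal form of $\paai$ yields
\[
(\forall^{\st}t^{1\to 1},f^{1})(\exists^{\st}n^{0})\bigl[\MCT_{\ef}(t)\to(f(n)=0\vee(\forall m)f(m)\neq 0)\bigr],
\]
whose matrix is internal; Corollary~\ref{consresultcor} extracts a closed term $u_{0}\in\T^{*}$. Define $u(t)(f)$ to be the least $n\in u_{0}(t,f)$ with $f(n)=0$ (and $0$ otherwise): given $\MCT_{\ef}(t)$ and $(\exists m)f(m)=0$, the second disjunct is impossible, so some candidate $n\in u_{0}(t,f)$ must witness $f(n)=0$, and $u$ selects such an $n$, yielding $\MU(u(t))$. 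For the forward direction, note that $(\exists^{\st}\mu^{2})\MU(\mu)$ trivially implies $\paai$ in $\P$, so the textbook proof of Theorem~\ref{partje} in fact yields $\P\vdash(\forall^{\st}\mu^{2})[\MU(\mu)\to\MCT_{\ns}]$, which unfolds to
\[
(\forall^{\st}\mu^{2},c^{0\to 1}_{(\cdot)},k^{0})(\exists^{\st}n^{0})\bigl[\MU(\mu)\wedge(\forall m)(c_{m}\leq c_{m+1}\leq 1)\to(\forall N,M\geq n)(|c_{N}-c_{M}|\leq 1/2^{k})\bigr].
\]
Corollary~\ref{consresultcor} then extracts a closed term $s_{0}\in\T^{*}$; setting $s(\mu)(c)(k):=\max_{i<|s_{0}(\mu,c,k)|}s_{0}(\mu,c,k)(i)$ gives $\MCT_{\ef}(s(\mu))$ whenever $\MU(\mu)$, because enlarging the index $n$ only strengthens the Cauchy condition $(\forall N,M\geq n)(\ldots)$.

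The main technical obstacle is the underspill normalisation of $\MCT_{\ns}$: it must be justified within $\P$ so that the unchanged textbook proof of Theorem~\ref{partje} still furnishes a derivation of the resulting normal-form statement, without smuggling in extra axioms. Once both normal forms are in place, the two applications of Corollary~\ref{consresultcor} and the cosmetic passage from finite sequences of candidate witnesses to single witnesses (by $\max$ or a bounded search) are entirely routine.
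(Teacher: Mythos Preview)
Your proposal is correct and follows essentially the same route as the paper: normalise both sides (using idealisation/underspill for nonstandard convergence, and the obvious Herbrandisation for $\paai$), strengthen the antecedent by introducing a standard functional witness ($t^{1\to 1}$ for $\MCT_{\ef}$, $\mu^{2}$ for $\MU$) so that the implication collapses to a single normal form, apply Corollary~\ref{consresultcor}, and then pass from the extracted finite sequence of witnesses to a single witness by taking a maximum or doing a bounded search. The paper spells out the functional-introduction step (your ``unfolding'') in more detail---it is the passage from \eqref{flug} through \eqref{kolli} to \eqref{hoori}---and leaves the forward direction to the reader, which you handle exactly as intended via the trivial $\P$-implication $(\exists^{\st}\mu)\MU(\mu)\to\paai$; one cosmetic slip is that you should say ``the given proof of $\paai\to\MCT_{\ns}$'' rather than ``the textbook proof of Theorem~\ref{partje}'', since the theorem is stated for \emph{any} proof.
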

\begin{proof}
We prove the second conjunct and leave the first one to the reader.  Corollary~\ref{consresultcor} only applies to normal forms and we now bring $\MCT_{\ns}\di \paai$ into a suitable normal form to apply this corollary and obtain the second conjunct of \eqref{frood}.  Clearly, $\paai$ implies the following normal form:
\be\label{tilda}
(\forall^{\st}f^{1})(\exists^{\st}n^{0})\big[ (\exists m)f(m)=0)\di f(n)=0\big].
\ee
The nonstandard convergence of $c_{(\cdot)}$, namely $(\forall N,M\in \Omega)[c_{M}\approx c_{N}]$, implies 
\[\textstyle
(\forall N,M)[ (\forall^{\st}n^{0})(M,N\geq n)\di   (\forall^{\st}k)|c_{M}- c_{N}|<\frac{1}{k}],
\]
in which we pull the standard quantifiers to the front as follows:
\[\textstyle
(\forall^{\st}k^{0})\underline{(\forall N,M)(\exists^{\st}n^{0})[ M,N\geq n\di   |c_{M}- c_{N}|<\frac{1}{k}]},
\]  
The contraposition of \emph{idealisation} \textsf{I} applies to the underlined.  We obtain:
\[\textstyle
(\forall^{\st}k^{0})(\exists^{\st}z^{0^{*}}){(\forall N,M)(\exists n^{0}\in z)[ M,N\geq n\di   |c_{M}- c_{N}|<\frac{1}{k}]},
\]  
and define $K^{0}$ as the maximum of $z(i)$ for $i<|z|$.  We finally obtain:
\be\label{norma}\textstyle
(\forall^{\st}k^{0})(\exists^{\st}K^{0}){(\forall N,M)[ M,N\geq K\di   |c_{M}- c_{N}|<\frac{1}{k}]}.
\ee
and \eqref{norma} is a normal form for nonstandard convergence.  Hence, $\MCT_{\ns}$ implies:
\[\textstyle
(\forall^{\st} c_{(\cdot)}^{0\di 1},k^{0})(\exists^{\st}K^{0})\big[(\forall n^{0})(c_{n}\leq c_{n+1}\leq 1)\di (\forall N,M\geq K)[|c_{M}- c_{N}|\leq \frac{1}{k} ]   \big], 
\]
and let the formula in square brackets be $D(c_{(\cdot)}{, k, K})$, while the formula in square brackets in \eqref{tilda} is $E(f,n)$.  
Then $\MCT_{\ns}\di \paai$ implies that
\be\label{flug}
(\forall^{\st} c_{(\cdot)}^{0\di 1},k^{0})(\exists^{\st}K^{0})D(c_{(\cdot)}, k, K)\di (\forall^{\st}f^{1})(\exists^{\st}n^{0})E(f, n).  
\ee
By the basic axioms in Definition \ref{debs}, any \emph{standard} functional $\Psi$ produces standard output on standard input, which yields
\be\label{kolli}
(\forall^{\st}\Psi) \big[(\forall^{\st} c_{(\cdot)}^{0\di 1},k^{0})D(c_{(\cdot)}, k, \Psi(k, c_{(\cdot)}))\di (\forall^{\st}f^{1})(\exists^{\st}n^{0})E(f, n)\big].  
\ee
We may drop the remaining `st' in the antecedent of \eqref{kolli} to obtain:
\[
(\forall^{\st}\Psi) \big[(\forall c_{(\cdot)}^{0\di 1},k^{0})D(c_{(\cdot)}, k, \Psi(k, c_{(\cdot)}))\di (\forall^{\st}f^{1})(\exists^{\st}n^{0})E(f, n)\big], 
\]
and bringing all standard quantifiers to the front, we obtain a normal form:
\be\label{hoori}
(\forall^{\st}\Psi , f^{1})(\exists^{\st}n^{0})\big[(\forall c_{(\cdot)}^{0\di 1},k^{0})D(c_{(\cdot)}, k, \Psi(k, c_{(\cdot)}))\di E(f, n)\big].  
\ee
Applying Corollary \ref{consresultcor} to `$\P\vdash \eqref{hoori}$', we obtain a term $t$ such that
\be\label{hoori12}
(\forall \Psi , f^{1})(\exists n^{0}\in t(\Psi, f))\big[(\forall c_{(\cdot)}^{0\di 1},k^{0})D(c_{(\cdot)}, k, \Psi(k, c_{(\cdot)}))\di E(f, n)\big].  
\ee
Define $s(f, \Psi)$ as the maximum of $t(\Psi, f)(i)$ for $i<|t(\Psi, f)|$.   Then \eqref{hoori} implies
\be\label{kurk}
(\forall \Psi )\big[(\forall c_{(\cdot)}^{0\di 1},k^{0})D(c_{(\cdot)}, k, \Psi(k, c_{(\cdot)}))\di (\forall f^{1})(\exists n\leq s(f, \Psi))E(f, n)\big], 
\ee
and we recognise the antecedent as the effective version of $\MCT$; the consequent is (essentially) $\MU(s(f, \Psi))$.  
Hence, the second conjunct of \eqref{frood} follows. 
\qed
\end{proof}
Note that the normal form \eqref{norma} of nonstandard convergence is the `epsilon-delta' definition of convergence with the `epsilon' and `delta' quantifiers enriched with `st'.  
While the previous proof may seem somewhat magical upon first reading, one readily jumps from the nonstandard implication $\MCT_{\ns}\di \paai$ to \eqref{kurk} with some experience.  

\medskip

In conclusion, \textbf{any} proof of $\paai\asa \MCT_{\ns}$ gives rise to the \emph{higher-order} computability result \eqref{frood}.  
We may thus conclude the latter from the proof of Theorem \ref{partje}.  In the next section, we show that the latter theorem's `textbook proof' is special in that it also gives rise to \emph{classical} computability-theoretic results.  The latter is non-trivial since both $\paai$ and $\MCT_{\ns}$ have a normal form starting with `$(\forall^{\st} h^{1})(\exists^{\st}l^{0})$' (up to coding).
As a result, to convert the implication $\MCT_{\ns}\di \paai$ into a normal form, one has to introduce a \emph{higher-order} functional like $\Psi$ to go from \eqref{flug} to \eqref{kolli}.  
Note that replacing the sequence of reals $c_{(\cdot)}^{0\di 1}$ in $\MCT_{\ns}$ by a sequence of rationals $q_{(\cdot)}^{1}$ does not lower $\Psi$ below type two. 
In a nutshell, the procedure in the previous proof (and hence most proofs in Nonstandard Analysis) \emph{always} seems to produce higher-order computability results.     

\subsection{An example of the classical-computational content of NSA}\label{post}
In the previous section, we showed that \textbf{any} proof of $\paai\asa \MCT_{\ns}$ gives rise to the \emph{higher-order} equivalence \eqref{frood}.  
In this section, we show that the particular `textbook proof' of $\paai\asa \MCT_{\ns}$ in Theorem \ref{partje} gives rise to \emph{classical} computability theoretic results as in \eqref{frood33} and \eqref{froodcor33}.  

\medskip
    
First of all, we show that the `textbook proof' of Theorem \ref{partje} is actually more uniform than the latter theorem suggests.  
To this end, let $\paai(f)$ and $\MCT_{\ns}({c_{(\cdot)}})$ be respectively $\paai$ and $\MCT_{\ns}$ from Section \ref{EXA} restricted to the function $f^{1}$ and sequence $c_{(\cdot)}$, i.e.\  the former principles are the latter with the quantifiers $(\forall f^{1})$ and $(\forall c_{(\cdot)}^{0\di 1})$ stripped off.   
\begin{thm}\label{tochwelcrux}
There are terms $s,t$ such that the system $\P$ proves 
\be\label{wagner}
(\forall^{\st}f^{1})\big[\MCT_{\ns}({t(f)})\di \paai(f)\big], 
\ee
\be\label{strauss}
(\forall^{\st} c^{0\di 1}_{(\cdot)})\big[(\forall^{\st}n^{0})\paai(s(c_{(\cdot)},n))\di \MCT_{\ns}(c_{(\cdot)})].
\ee
All proofs are implicit in the `textbook proof' of Theorem \ref{partje}.  
\end{thm}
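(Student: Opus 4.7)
The plan is to carefully revisit the textbook proof of Theorem~\ref{partje} and read off the terms $t$ and $s$ directly from the constructions it uses. In both directions, the witnessing objects are built explicitly from the parameters by closed terms of $\T^{*}$, so no appeal to the soundness metatheorem is required; everything takes place inside $\P$ itself, and the standardness of the outputs for standard inputs is supplied by the basic axioms of $\T^{*}_{\st}$ in Definition~\ref{debs}.

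For \eqref{wagner}, I would let $t(f)$ be exactly the sequence $c_{(\cdot)}$ constructed from $f$ in the reverse implication of Theorem~\ref{partje}: namely, $(t(f))_{k}=0$ if $(\forall n\leq k)(f(n)\neq 0)$, and $(t(f))_{k}=\sum_{i=1}^{k}2^{-i}$ otherwise. This is manifestly a closed term, is standard whenever $f$ is, and is monotone and bounded by $1$ by construction, so the premise $\MCT_{\ns}(t(f))$ is meaningful. The derivation that $\MCT_{\ns}(t(f))$ entails $\paai(f)$ is then the word-for-word argument from Theorem~\ref{partje}: if $f$ had a least zero $m_{0}$, then under the hypothesis $(\forall^{\st}n)(f(n)\neq 0)$ the number $m_{0}$ would be nonstandard, whereas $t(f)_{m_{0}-1}=0$ and $t(f)_{m_{0}}\approx 1$, contradicting $\MCT_{\ns}(t(f))$.

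For \eqref{strauss}, I would localise the forward direction in the obvious way. The proof of Theorem~\ref{partje} applies the contrapositive of $\paai$ to a $\Sigma_{1}^{0}$-statement of the shape $(\exists N,M\geq n_{0})\,\psi(c_{(\cdot)},N,M,k_{0})$, where $\psi$ is a quantifier-free predicate expressing $|c_{N}-c_{M}|\geq 1/k_{0}$ through rational approximations. Reading $n$ as a code for a pair $\langle n_{0},k_{0}\rangle$, I would therefore define $s(c_{(\cdot)},n)$ to be the function $f^{1}$ with $f(m)=0$ iff $m$ codes some $\langle N,M\rangle$ satisfying $N,M\geq n_{0}$ and $\psi(c_{(\cdot)},N,M,k_{0})$. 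Then $\paai(s(c_{(\cdot)},n))$ is exactly the instance of $\paai$ used in the original proof for the parameters $n_{0},k_{0}$, and the hypothesis $(\forall^{\st}n)\paai(s(c_{(\cdot)},n))$ supplies, uniformly in both parameters, the standard witnesses needed for each of the $k_{0}+1$ iterations in the escape-from-the-unit-interval argument; concatenating the $k_{0}+1$ standard separations produced that way yields $c_{M_{k_{0}+1}}-c_{N_{1}}>1$, the desired contradiction.

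The main technical obstacle I anticipate is that $|c_{N}-c_{M}|\geq 1/k$ is strictly speaking $\Pi_{1}^{0}$ under the Cauchy representation of Definition~\ref{keepintireal}, whereas the body of the defining formula of $s$ must be quantifier-free (so that $s(c_{(\cdot)},n)$ is a term of $\T^{*}$ and $\paai$ can be invoked on it). I would resolve this by substituting the standard decidable proxy $|[c_{N}](k+2)-[c_{M}](k+2)|>1/k+2^{-(k+1)}$; the fast-convergence bound guarantees that this proxy is equivalent to the original separation up to an adjustment of $k$ by a constant, which is harmless inside the $k_{0}+1$-fold iteration. All remaining steps—verifying monotonicity of $t(f)$, pairing the successive witnesses, and tracking the standardness of $t(f)$ and $s(c_{(\cdot)},n)$—are routine internal bookkeeping inherited verbatim from Theorem~\ref{partje}.
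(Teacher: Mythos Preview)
Your proposal is correct and follows essentially the same approach as the paper: the term $t$ for \eqref{wagner} is defined identically (this is \eqref{trufke} in the paper), and for \eqref{strauss} the paper merely says ``proved in exactly the same way'' with the intuition that only one $\Sigma_{1}^{0}$-instance (with parameters $n^{0}$ and $c_{(\cdot)}$) of $\paai$ is used, which is precisely what your $s$ encodes. Your explicit handling of the decidability of $|c_{N}-c_{M}|\geq 1/k$ via rational approximations is a detail the paper glosses over, and your resolution is the standard one.
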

\begin{proof}
To establish \eqref{wagner}, define the term $t^{1\di 1}$ as follows for $f^{1}, k^{0}$:   
\be\label{trufke}
t(f)(k):=
\begin{cases}
0 & (\forall i\leq k)(f(i)\ne0) \\
\sum_{i=0}^{k}\frac{1}{2^{i}} & \textup{otherwise}
\end{cases}.
\ee
The proof of Theorem \ref{partje} now yields \eqref{wagner}.  Indeed, fix a standard function $f^{1}$ such that $(\forall^{\st}k^{0})(f(k)\ne 0)\wedge (\exists n)(f(n)=0)$ and $\MCT_{\ns}(t(f))$.
By the latter, the sequence $t(f)$ nonstandard convergences, while $0=t(f)(n_{0}-1)\not\approx t(f)(n_{0})\approx 1$ for $n_{0}$ the least (necessarily nonstandard) $n$ such that $f(n)=0$.  From this contradiction, $\paai(f)$ follows, and thus also \eqref{wagner}.  

\medskip

The remaining implication \eqref{strauss} is proved in exactly the same way.  
Indeed, the intuition behind the previous part of the proof is as follows:  
In the proof of the reverse implication of Theorem \ref{partje}, to establish $\paai(f)$ for fixed standard $f^{1}$, we \textbf{only} used $\MCT_{\ns}$ for \textbf{one} particular sequence, namely $t(f)$.  
Hence, we only need $\MCT_{\ns}(t(f))$, and not `all of' $\MCT_{\ns}$, thus establishing \eqref{wagner}.  Similarly, in the proof of the forward implication of Theorem \ref{partje}, to derive $\MCT_{\ns}(c_{(\cdot)})$ for fixed $c_{(\cdot)}$, we \textbf{only} applied $\paai$ to \textbf{one} specific $\Sigma_{1}^{0}$ formula with a standard parameters $n^{0}$ and $c_{(\cdot)}$.    
\qed
\end{proof}
We are now ready to reveal the intended `deeper' meaning of the term `textbook proof':
The latter refers to a proof (which may not exist) of an implication $(\forall^{\st}f)A(f)\di (\forall^{\st} g)B(g)$ which also establishes $(\forall^{\st}g)[A(t(g))\di B(g)]$, \emph{and} in which 
the formula in square brackets is a formula in which all standard quantifiers involve variables of type zero.  By Theorem \ref{sef33}, such a `textbook proof' gives rise to results in \textbf{classical} computability theory.  

\medskip

We choose the term `textbook proof' because proofs in Nonstandard Analysis (especially in textbooks) are quite explicit in nature, i.e.\ one often establishes $(\forall^{st}g)[A(t(g))\di B(g)]$ in order to prove $(\forall^{\st}f)A(f)\di (\forall^{\st} g)B(g)$.

\medskip

Before we can apply Corollary \ref{consresultcor} to Theorem \ref{tochwelcrux}, we need some definitions, as follows.  
First, consider the following `second-order' version of $(\mu^{2})$:
\be\tag{$\MU^{A}(\nu )$}
(\forall e^{0}, n^{0})\big[(\exists m, s)(\varphi_{e,s}^{A}(n)=m)\di (\exists m, s\leq \nu(e, n))(\varphi_{e,s}^{A}(n)=m)\big].
\ee
where `$\varphi_{e,s}^{A}(m)=n$' is the usual (primitive recursive) predicate expressing that the $e$-th Turing machine with input $n$ and oracle $A$ halts after $s$ steps with output $m$; sets $A, B, C, \dots$ are denoted by binary sequences.  
One easily defines the (second-order) Turing jump of $A$ from $\nu^{1}$ as in $\MU^{A}(\nu)$ and vice versa.    

\medskip

Next, we introduce the `computability-theoretic' version of $\MCT_{\ef}(t)$.  
To this end, let $\TOT(e, A)$ be the formula `$(\forall n^{0})(\exists m^{0}, s^{0})(\varphi_{e,s}^{A}(n)=m)$', i.e.\ the formula expressing that the Turing machine with index $e$ and oracle $A$ halts for all inputs, also written `$(\forall n^{0})\varphi_{e}^{A}(n)\downarrow$'.  
Assuming the latter formula to hold for $e^{0}, A^{1}$, the function $\varphi_{e}^{A}$ is clearly well-defined, and will be used in $\P$ in the usual\footnote{For instance, written out in full `$0\leq \varphi_{e}^{A}(n)\leq \varphi_{e}^{A}(n+1)\leq 1$' from $\MCT_{\ef}^{A}(t)$ is:
\be\label{krunt}
(\forall s^{0}, q^{0}, r^{0})\big[  (\varphi_{e,s}^{A}(n)=q\wedge  \varphi_{e,s}^{A}(n+1)=r ) \di  0\leq_{0} q \leq_{0}r\leq_{0} 1 \big], 
\ee
where we also omitted the coding of rationals.} sense of computability theory.  We assume $\varphi_{e}^{A}(n)$ to code a rational number without mentioning the coding.  
We now introduce the `second-order' version of $\MCT_{\ef}(t)$:
\begin{align}\label{MCTA2}\textstyle\tag{$\MCT_{\ef}^{A}(t)$}
(\forall e^{0})\big[\TOT(e, A)\wedge (\forall n^{0})(0\leq& \varphi_{e}^{A}(n)\leq \varphi_{e}^{A}(n+1)\leq 1)\\
&\textstyle\di (\forall k^{0})(\forall N,M\geq t(e, k))[|\varphi^{A}_{e}(N)-\varphi_{e}^{A}(M)|\leq \frac{1}{k} ]   \big].\notag
\end{align}  
Here, $t$ has type $(0\times 0)\di 0$ or $0\di 1$, and we will usually treat the former as a type one object.   
Finally, let $\MCT_{\ef}^{A}(t, e)$ and $\MU^{A}(\nu, e,n)$ be the corresponding principles with the quantifiers outside the outermost square brackets removed.  

\begin{thm}\label{sef33}
From the textbook proof of $\MCT_{\ns}\di \paai$, three terms $s^{1\di 1}, u^{1}, v^{1\di 1}$ can be extracted such that $\textsf{\textup{E-PA}}^{\omega*}$ proves:
\be\label{frood33}
 (\forall A^{1}, \psi^{0\di 1})\big[ \MCT_{\ef}^{A}(\psi)\di  \MU^{A}(s( \psi, A))  \big].
\ee
\be\label{froodcor33}
 (\forall e^{0}, n^{0},A^{1},\phi^{ 1})\big[ \MCT_{\ef}^{A}(\phi, u(e,n))\di  \MU^{A}(v( \phi, A, e,n), e,n)  \big].
\ee
\end{thm}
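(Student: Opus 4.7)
The plan is to imitate the term-extraction argument of Theorem \ref{sef}, but applied to the \emph{textbook} proof codified in Theorem \ref{tochwelcrux}. The decisive observation is that the reduction $f \mapsto t(f)$ of \eqref{trufke} is primitive recursive, so when $f$ is $A$-computable via an index $e$ the sequence $t(f)$ is $A$-computable via a primitive recursive index obtained by $s$-$m$-$n$. This replaces the type-2 quantifier over sequences (which forced the $\Psi^{2}$ of Theorem \ref{sef}) by a type-1 quantifier over Turing indices, and this is exactly what lowers the extracted witness from type two to type one.

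First, fix via $s$-$m$-$n$ inside $\textsf{E-PA}^{\omega*}$ a primitive recursive term $u(e,n)$ such that for every $A$,
\[
\varphi^A_{u(e,n)}(k) =
\begin{cases}
0 & \textup{if } (\forall i \leq k)\,\neg \varphi^A_{e,i}(n)\downarrow,\\
1 - 1/2^{k+1} & \textup{otherwise,}
\end{cases}
\]
so that $\TOT(u(e,n),A)$ and the monotonicity-in-$[0,1]$ hypothesis of $\MCT^A_{\ef}(\phi, u(e,n))$ hold automatically. Applying the reverse implication of the textbook proof of Theorem \ref{partje} to the standard function $f := \lambda k.[\neg \varphi^A_{e,k}(n)\downarrow]$---for which $\varphi^A_{u(e,n)}$ literally is the $t(f)$ of \eqref{trufke}---gives in $\P$
\[
(\forall^{\st}e, n, A)\big[\MCT_{\ns}(\varphi^A_{u(e,n)}) \di \big((\forall^{\st}s)\neg\varphi^A_{e,s}(n)\downarrow \di (\forall s)\neg\varphi^A_{e,s}(n)\downarrow\big)\big].
\]

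Second, bring this implication into normal form exactly as in the proof of Theorem \ref{sef}. Idealisation \textsf{I} converts $\MCT_{\ns}(\varphi^A_{u(e,n)})$ into \eqref{norma} with $c_{(\cdot)}$ replaced by $\varphi^A_{u(e,n)}$, and the crucial point is that the Skolem function for the convergence modulus now needs only the \emph{type-0} index $u(e,n)$ together with $k$; so a standard type-1 $\phi$ with $\phi(u(e,n),k) = K$ witnesses it, which is exactly $\MCT^A_{\ef}(\phi, u(e,n))$. The consequent is equivalent to the $\Sigma^0_1$-normal form $(\exists^{\st}s')[(\exists s)\varphi^A_{e,s}(n)\downarrow \di (\exists s \leq s')\varphi^A_{e,s}(n)\downarrow]$, i.e.\ $\MU^A(\lambda e,n.\,s', e, n)$. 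Pulling all standard quantifiers to the front yields
\[
(\forall^{\st}\phi^1, A^1, e^0, n^0)(\exists^{\st}s'^{0})\big[\MCT^A_{\ef}(\phi, u(e,n)) \di \MU^A(\lambda e,n.\, s', e, n)\big],
\]
to which Corollary \ref{consresultcor} applies and produces the closed term $v$ of \eqref{froodcor33}. Finally, \eqref{frood33} follows by noting that $\MCT^A_{\ef}(\psi)$ implies $\MCT^A_{\ef}(\psi, u(e,n))$ for every $e,n$, and defining $s(\psi, A) := \lambda e, n.\,v(\psi, A, e, n)$.

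The main subtlety is this re-indexing of the Skolem modulus from an arbitrary type-1 sequence to a Turing-indexed one: it is precisely what keeps the extracted functional at type one. It goes through because the textbook construction \eqref{trufke} is primitive recursive in $f$ and hence composes with Turing indices via $s$-$m$-$n$; applied instead to a non-textbook proof of $\MCT_{\ns} \di \paai$ that invoked nonstandard convergence for a genuinely non-computable sequence, the same machinery would reintroduce a type-2 witness and land us back at Theorem \ref{sef} rather than at \eqref{frood33}.
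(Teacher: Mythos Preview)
Your proof is correct and follows essentially the same route as the paper: both use the pointwise textbook reduction \eqref{wagner}, bring the implication to normal form, apply Corollary~\ref{consresultcor}, and then connect to the oracle formulation via the primitive-recursive encoding of $t(f)$ as an $A$-computable sequence. The only difference is the order of operations---the paper first extracts a term from the generic normal form \eqref{joki} (over all $f^{1},\psi^{1}$) to obtain the super-pointwise \eqref{kkkk} and \emph{then} specialises $f$ to $\lambda k.f_{0}(e,n,A,k)$, whereas you specialise inside $\P$ first (building $u(e,n)$ via $s$-$m$-$n$) and extract afterwards; this reordering is harmless and yields the same terms up to trivial recoding.
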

\begin{proof}
Similar to the proof of Theorem \ref{sef}, a normal form for $\paai(f)$ is:
\be\label{noniesimpel33}
(\exists^{\st }n^{0})\big[(\exists m)(f(m)=0)\di (\exists i\leq n)(f(i)=0)  ], 
\ee
while, for $t$ as in \eqref{trufke},  a normal from for $\MCT_{\ns}(t(f))$ is:
\begin{align}\textstyle\label{noniesimpel34}
(\forall^{\st}k^{0})(\exists^{\st}K^{0})\big[(\forall n^{0} )&(0\leq t(f)(n)\leq t(f)(n+1)\leq 1)\\
&\textstyle\di (\forall N^{0}, M^{0}\geq K)(|t(f)(N)-t(f)(M)|\leq \frac{1}{k} )  \big],\notag
\end{align}
Let $C(n,f)$ (resp.\ $B(k,K, f)$) be the formula in (outermost) square brackets in \eqref{noniesimpel33} (resp.\ \eqref{noniesimpel34}).  
Then \eqref{wagner} is the formula 
\[
(\forall^{\st}f^{1})[(\forall^{\st} k)(\exists^{\st}K)B(k, K, f)\di (\exists^{\st}n^{0})C(n, f)], 
\]    
which (following the proof of Theorem \ref{sef}) readily implies the normal form:
\be\label{joki}
(\forall^{\st}f^{1}, \psi^{1})(\exists^{\st}n^{0})[(\forall k)B(k, \psi(k), f)\di C(n, f)].  
\ee 
Applying Corollary \ref{consresultcor} to `$\P_{0}\vdash \eqref{joki}$' yields a term $z^{2}$ such that 
\[
(\forall f^{1}, \psi^{1})(\exists n\in z(f, \psi))\big[(\forall k)B(k, \psi(k), f)\di C(n, f)\big]
\] 
is provable in $\textsf{E-PA}^{\omega*}$.  Define the term $s(f, \psi)$ as the maximum of all $z(f, \psi)(i)$ for $i<|z(f,\psi)|$ and note that (by the monotonicity of $C$):
\be\label{kkkk}
(\forall f^{1}, \psi^{1})\big[(\forall k)B(k, \psi(k), f)\di C(s(f, \psi), f)\big].
\ee 
Now define $f_{0}^{2}$ as follows:  $f_{0}(e, n, A, k)=0$ if $(\exists m,s\leq k)(\varphi_{e,s}^{A}(n)=m)$, and $1$ otherwise.  For this choice of function, namely taking $f^{1}=_{1}\lambda k.f_{0}$, the sentence \eqref{kkkk} implies for all $A^{1}, \psi^{1}, e^{0}, n^{0}$ that
\be\label{bahare}
(\forall k')B(k', \psi(k'), \lambda k.f_{0})\di C(s(\lambda k. f_{0}, \psi), \lambda k.f_{0}), 
\ee
where we used the familiar lambda notation with some variables of $f_{0}$ suppressed to reduce notational complexity.  
Consider the term $t$ from \eqref{trufke} and note that there are (primitive recursive) terms $x^{1}, y^{1}$ such that for all $m$ we have $t(\lambda k.f_{0}(e,n,A,k))(m)=\varphi^{A}_{x(e,n), y(e, n)}(m) $; the definition of $x^{1}, y^{1}$ is implicit in the definition of $t$ and $f_{0}$.  Hence, with these terms, the antecedent and consequent of \eqref{bahare} are as required to yield \eqref{froodcor33}.

\medskip

To prove \eqref{frood33} from \eqref{bahare}, suppose we have $(\forall k')B(k', \xi(e,n)(k'), \lambda k.f_{0})$ for all $e^{0}, n^{0}$ and some $\xi^{0\di 1} $ and $ A^{1}$, where $\xi(e, n)$ has type $1$.  By \eqref{bahare} we obtain 
\[
(\forall e^{0}, n^{0})C(s(\lambda k.f_{0}, \xi(e, n)), \lambda k.f_{0}).
\]
Putting the previous together, we obtain the sentence:
\begin{align}\label{horrenzoon}
(\forall A^{1}, \xi^{0\di 1})\big[(\forall e^{0}, n^{0}, k')B(k', &~\xi(e, n)(k'), \lambda k.f_{0})\\
&\di (\forall e^{0}, n^{0})C(s(\lambda k.f_{0},  \xi(e, n)), \lambda k.f_{0})\big].\notag  
\end{align}
Clearly, the consequent of \eqref{horrenzoon} implies that $s(\lambda k.f_{0},  \xi(e, n))$ provides the Turing jump of $A$ as in $\MU^{A}(\lambda e\lambda n.s(\lambda k.f_{0},  \xi(e, n)))$.  
On the other hand, the antecedent of \eqref{horrenzoon} expresses that the sequence $t(\lambda k.f_{0}(e, n, A, k))$ converges for all $e, n$ as witnessed by the modulus $\xi(e, n)$.  
In light of the definitions of $f_{0}$ and $t$, the sequence $t(\lambda k.f_{0})$ (considered as a type one object) is definitely computable from the oracle $A$ (in the usual sense of Turing computability).  Thus, the antecedent of \eqref{horrenzoon} also follows from $\MCT_{\ef}^{A}(\xi)$.  In other words, \eqref{horrenzoon} yields
\be
(\forall A^{1}, \xi^{0\di 1})\big[\MCT_{\ef}^{A}(\xi)\di \MU^{A}(\lambda e\lambda n.s(\lambda k.f_{0},  \xi(e, n)))\big],
\ee
which is as required for the theorem, with minor modifications to the term $s$.  
\qed
\end{proof}
Note that \eqref{froodcor33} expresses that in order to decide if the $e$-th Turing machine with oracle $A$ and input $n$ halts, it suffices to have the term $s$ and a modulus of convergence for the sequence of rationals given by $\varphi^{A}_{u(e, n)}$.  We do not claim these to be ground-breaking results in computability theory, 
but we do point out the surprising ease and elegance with which they fall out of \emph{textbook proofs} in Nonstandard Analysis.  Taking into account the claims\footnote{Bishop (See \cite{kluut}*{p.\ 513}, \cite{bishl}*{p.\ 1}, and \cite{kuddd}, which is the review of \cite{keisler3}) and Connes (See \cite{conman2}*{p.\ 6207} and \cite{conman}*{p.\ 26}) have made rather strong claims regarding the non-constructive nature of Nonstandard Analysis.  Their arguments have been investigated in remarkable detail and were mostly refuted (See e.g.\ \cites{gaanwekatten, keisler4, kano2}).  } by Bishop and Connes that 
\emph{Nonstandard Analysis be devoid of computational/constructive content}, we believe that the word `surprise' is perhaps not misplaced to describe our results.  

\medskip

In a nutshell, to obtain the previous theorem, one first establishes the `nonstandard uniform' version \eqref{wagner} of $\MCT_{\ns}\di \paai$, which yields the `super-pointwise' version \eqref{kkkk}.  The latter is then weakened into \eqref{froodcor33} and then weakened into \eqref{frood33}; this modification should be almost identical for other similar implications.  
In particular, it should be straightforward, but unfortunately beyond the page limit, to obtain versions of Theorems \ref{tochwelcrux} and \ref{sef33} for \emph{K\"onig's lemma} and \emph{Ramsey's theorem} (\cite{simpson2}*{III.7}), or any theorem equivalent to $\ACA_{0}$ in RM for that matter.  

\medskip

Furthemore, results related to \emph{weak K\"onig's lemma}, the third Big Five system of RM (\cite{simpson2}*{IV}) and the RM zoo (\cite{damirzoo}), can be obtained in the same way as above.  
For instance, one can easily obtain $\paai\di \WKL_{\ns}$ where the latter is the nonstandard modification of $\WKL$ stating the existence of a \emph{standard} path for every \emph{standard} infinite binary tree.  However, the existence of a `textbook proof' (as discussed right below Theorem \ref{tochwelcrux}) for this implication (or the reverse implication) leads to a contradiction.        

\medskip
  
In conclusion, higher-order computability results can be obtained from arbitrary proofs of $\MCT_{\ns}\di \paai$, while the \emph{textbook proof} as in the proof of Theorem \ref{partje} yields classical computability theory as in Theorem \ref{sef33}.

\subsection{The connection between higher-order and classical computability theory}
This paper would not be complete without a discussion of the $\ECF$-translation, which connects higher-order and second-order mathematics.
In particular, we show that applying the $\ECF$-translation to e.g.\ \eqref{frood} does not yield e.g.\ \eqref{froodcor33}.  

\medskip
  
We first define the central $\ECF$-notion of `associate' which some will know in an equivalent guise:  
Kohlenbach shows in \cite{kohlenbach4}*{Prop.\ 4.4} that the existence of a `RM code' for a continuous functional $\Phi^{2}$ as in \cite{simpson2}*{II.6.1}, is equivalent to the 
existence of an \emph{associate} for $\Phi$, and equivalent to the existence of a modulus of continuity for $\Phi$, Simpson's claims from \cite{simpson2}*{I.8.9.5} notwithstanding.    
\bdefi\label{defke}
The function $\alpha^{1}$ is an \emph{associate} of the continuous $\Phi^{2}$ if:
\begin{enumerate}[(i)]
\item $(\forall \beta^{1})(\exists k^{0})\alpha(\overline{\beta} k)>0$,\label{defitem2}
\item $(\forall \beta^{1}, k^{0})(\alpha(\overline{\beta} k)>0 \di \Phi(\beta)+1=_{0}\alpha(\overline{\beta} k))$.\label{defitem}
\end{enumerate}
\edefi
\noindent
With regard to notation, it is common to write $\alpha(\beta)$, to be understood as $\alpha(\overline{\beta} k)-1$ for large enough $k^{0}$ (See also Definition \ref{depke} below).  
Furthermore, we assume that every associate is a \emph{neighbourhood function} as in \cite{kohlenbach4}, i.e.\ $\alpha$ also satisfies 
\[
(\forall \sigma^{0^{*}}, \tau^{0^{*}})\big[{\alpha}(\sigma)>0\wedge |\sigma|\leq |\tau| \wedge (\forall i<|\sigma|)(\sigma(i)=\tau(i)) \di \alpha(\sigma)=\alpha(\tau) \big].  
\]  
We now sketch the $\ECF$-translation;
Note that $\RCAo$ is Kohlenbach's base theory for higher-order RM (\cite{kohlenbach2}); this system is essentially $\textsf{E-PA}^{\omega}$ weakened to one-quantifier-induction and with a fragment of the axiom of choice.  
\begin{rem}[$\ECF$-translation]\label{klit}\rm
The syntactic translation `$[\cdot]_{\ECF}$' is introduced in \cite{troelstra1}*{\S2.6.5} and we refer to the latter for the exact definition.  
Intuitively, applying the $\ECF$-translation to a formula amounts to nothing more than \emph{replacing all objects of type two or higher by associates}.  
Furthermore, Kohlenbach observes in \cite{kohlenbach2}*{\S2} that if $\RCAo\vdash A$ then $\RCA_{0}^{2}\vdash [A]_{\ECF}$, i.e.\ $[\cdot]_{\ECF}$ provides a translation from $\RCAo$ to (a system which is essentially) $\RCA_{0}$, the base theory of RM.  
\end{rem}
Thus, we observe that the $\ECF$-translation connects higher-order and second-order mathematics.
We now show that the $\ECF$-translation is not a `magic bullet' in that $[A]_{\ECF}$ may not always be very meaningful, as discussed next.     
\begin{exa}[The $\ECF$-translation of $(\mu^{2})$]\label{klit2}\rm 
The $\ECF$-translation interprets the \emph{discontinuous}\footnote{Suppose $f_{1}=11\dots$ and $\mu^{2}$ from $(\mu^{2})$ is continuous; then there is $N_{0}^{0}$ such that $(\forall g^{1})(\overline{f_{1}}N_{0}=\overline{g}N_{0}\di \mu(f_{1})=_{0}\mu(g))$.  Let $N_{1}$ be the maximum of $N_{0}$ and $\mu(f_{1})$.  Then $g_{0}:=\overline{f_{1}}N_{1}*00\dots$ satisfies $\overline{f_{1}}N_{1}=\overline{g_{0}}N_{1}$, and hence $\mu(f_{1})=\mu(g_{0})$ and $f_{1}(\mu(f_{1}))=g_{0}(\mu(g_{0}))$, but the latter is $0$ by the definition of $g_{0}$ and $\mu$, a contradiction.\label{korko}} functional $\mu^{2}$ as in $\MU(\mu)$ as a \emph{continuous} object satisfying the latter formula, which is of course impossible\footnote{If a functional has an associate, it must be continuous on Baire space.  We established in Footnote \ref{korko} that $(\mu^{2})$ cannot be continuous, and thus cannot have an associate.}, and the same holds for theorems equivalent to $(\mu^{2})$ as they involve discontinuous functionals as well.  
Hence, the $\ECF$-translation reduces the implications in \eqref{frood} to (correct) trivialities of the form `$0=1\di 0=1$'. 
\end{exa}
By the previous example, we observe that the answer to question \textsf{(Q)} is not just `apply $\ECF$' in the case of theorems involving $(\mu^{2})$.
Nonetheless, we \emph{could} apply the $\ECF$-translation to \eqref{frood33} and \eqref{froodcor33} to replace the terms $s, u, v$ by \emph{associates}.  
To this end, we require definition of partial function application (See e.g.\ \cite{troelstra1}*{1.9.12} or \cite{kohlenbach3}*{Def.\ 3.58}) for the final corollary.    
\bdefi[Partial function application]\label{depke}
For $\alpha^{1}, \beta^{1}$, `$\alpha(\beta)$' is defined as
\[
\alpha(\beta):=
\begin{cases}
\alpha(\overline{\beta}k)-1 & \textup{If $k^{0}$ is the least $n$ with $\alpha(\overline{\beta}n)>0$}\\
\textup{undefined} & \textup{otherwise}
\end{cases},
\]
and $\alpha | \beta := (\lambda n^{0})\alpha(\langle n\rangle*\beta)$.  
We write $\alpha(\beta)\downarrow$ to denote that $\alpha(\beta)$ is defined, and $\alpha|\beta\downarrow$ to denote that $(\alpha|\beta)(n)$ is defined for all $n^{0}$.  For $\beta^{1}, \gamma^{1}$, we define the paired sequence $\beta\oplus \gamma$ by putting $(\beta\oplus \gamma)(2k)=\beta(k)$ and $(\beta\oplus \gamma)(2k+1)=\gamma(k)$.   
\end{defi}
We now consider the following corollary to Theorem \ref{sef33}.  
\begin{cor}\label{sef3xxxx3}
From the textbook proof of $\MCT_{\ns}\di \paai$, a term $z^{1}$ can be extracted such that $\textsf{\textup{E-PA}}^{\omega*}$ proves:
\be\label{froodxxxxxx33}
(\forall \psi^{1}, A^{1})\big[ \MCT_{\ef}^{A}(\psi)\di  [z|(\psi\oplus A)\downarrow \wedge~ \MU^{A}(z|(\psi\oplus A))].
\ee
\end{cor}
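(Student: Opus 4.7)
The plan is to derive \eqref{froodxxxxxx33} by applying the $\ECF$-translation (Remark \ref{klit}) to the statement \eqref{frood33} from Theorem \ref{sef33}. Recall that \eqref{frood33} reads $(\forall A^{1}, \psi^{1})\big[\MCT_{\ef}^{A}(\psi)\di \MU^{A}(s(\psi,A))\big]$, where $s$ is a closed term of type level two, taking two type-$1$ arguments and returning a type-$1$ object. Since all quantifiers in \eqref{frood33} range over objects of type at most one, and the matrix is arithmetical in $A$ and $\psi$, the $\ECF$-translation leaves $\MCT_{\ef}^{A}(\psi)$ and $\MU^{A}(\nu)$ (for $\nu^{1}$) essentially unchanged; the only nontrivial action is on the closed term $s$.

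Next, I would invoke the standard construction (as in \cite{troelstra1}*{\S2.6.5} and \cite{kohlenbach3}*{Def.~3.58}) that converts any closed type-$2$ term of $\textsf{E-PA}^{\omega*}$ into an associate of type $1$, in an effective, internally definable way. This yields a closed term $z^{1}$ with the property that $z|\beta\downarrow$ for every type-$1$ input $\beta$, and such that $z|\beta$ computes $s$ applied to the arguments coded by $\beta$. Using the pairing $\psi\oplus A$ from Definition \ref{depke} to combine the two arguments into a single type-$1$ oracle, one obtains $s(\psi,A)=_{1}z|(\psi\oplus A)$ pointwise, so that substituting into \eqref{frood33} delivers exactly \eqref{froodxxxxxx33}, with the conjunct $z|(\psi\oplus A)\downarrow$ coming for free from the totality of the closed term $s$.

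The only real obstacle is bookkeeping: one has to follow the inductive $\ECF$-clauses carefully so that the two type-$1$ arguments $\psi$ and $A$ are consistently merged via $\oplus$, and so that the associate $z$ interacts correctly with partial function application as in Definition \ref{depke}. This is routine because the term $s$ extracted in Theorem \ref{sef33} is built from the combinators of $\T^{*}$ together with the recursor, and each such constructor has a canonical associate-level counterpart whose totality is preserved under composition. In particular, no appeal to any discontinuous functional (such as $\mu^{2}$) is required in the construction of $z$ from $s$, so the situation of Example \ref{klit2} does not arise here and the $\ECF$-translation produces a non-trivial, classically meaningful statement.
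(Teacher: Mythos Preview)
Your proposal is correct and follows exactly the paper's own approach: the paper's proof is the single line ``Immediate from applying the $\ECF$-translation to \eqref{frood33}.'' Your additional remarks about why $\ECF$ acts nontrivially only on the closed term $s$, and why the resulting associate $z$ is total (because $s$ is built from the combinators and recursors of $\T^{*}$, each of which has a canonical total associate), are accurate elaborations of what that one-line proof leaves implicit.
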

\begin{proof}
Immediate from applying the $\ECF$-translation to \eqref{frood33}.  
\end{proof}
Note that \eqref{froodxxxxxx33} is part of second-order arithmetic.  
\begin{ack}\rm
The author would like to thank Richard Shore, Anil Nerode, and Vasco Brattka for their valuable advice and encouragement.
\end{ack}

\section{Bibliography}
\begin{biblist}
\bib{brie}{article}{
  author={van den Berg, Benno},
  author={Briseid, Eyvind},
  author={Safarik, Pavol},
  title={A functional interpretation for nonstandard arithmetic},
  journal={Ann. Pure Appl. Logic},
  volume={163},
  date={2012},
  number={12},
  pages={1962--1994},
}

\bib{bishl}{book}{
  author={Bishop, Errett},
  title={Aspects of constructivism},
  publisher={Notes on the lectures delivered at the Tenth Holiday Mathematics Symposium},
  place={New Mexico State University, Las Cruces, December 27-31},
  date={1972},
  pages={pp.\ 37},
}

\bib{kuddd}{article}{
  author={Bishop, Errett},
  title={Review of \cite {keisler3}},
  year={1977},
  journal={Bull. Amer. Math. Soc},
  volume={81},
  number={2},
  pages={205-208},
}

\bib{kluut}{article}{
  author={Bishop, Errett},
  title={The crisis in contemporary mathematics},
  booktitle={Proceedings of the American Academy Workshop on the Evolution of Modern Mathematics},
  journal={Historia Math.},
  volume={2},
  date={1975},
  number={4},
  pages={507--517},
}

\bib{conman}{article}{
  author={Connes, Alain},
  title={An interview with Alain Connes, Part I},
  year={2007},
  journal={EMS Newsletter},
  note={\url {http://www.mathematics-in-europe.eu/maths-as-a-profession/interviews}},
  volume={63},
  pages={25-30},
}

\bib{conman2}{article}{
  author={Connes, Alain},
  title={Noncommutative geometry and reality},
  journal={J. Math. Phys.},
  volume={36},
  date={1995},
  number={11},
  pages={6194--6231},
}

\bib{damirzoo}{misc}{
  author={Dzhafarov, Damir D.},
  title={Reverse Mathematics Zoo},
  note={\url {http://rmzoo.uconn.edu/}},
}

\bib{kano2}{article}{
  author={Kanovei, Vladimir},
  author={Katz, Mikhail G.},
  author={Mormann, Thomas},
  title={Tools, objects, and chimeras: Connes on the role of hyperreals in mathematics},
  journal={Found. Sci.},
  volume={18},
  date={2013},
  number={2},
  pages={259--296},
}

\bib{gaanwekatten}{article}{
  author={Katz, Mikhail G.},
  author={Leichtnam, Eric},
  title={Commuting and noncommuting infinitesimals},
  journal={Amer. Math. Monthly},
  volume={120},
  date={2013},
  number={7},
  pages={631--641},
}

\bib{keisler4}{article}{
  author={Keisler, H. Jerome},
  title={Letter to the editor},
  journal={ Notices Amer. Math. Soc.},
  date={1977},
  pages={p.\ 269},
  number={24},
}

\bib{keisler3}{book}{
  author={Keisler, H. Jerome},
  title={Elementary Calculus},
  publisher={Prindle, Weber and Schmidt},
  date={1976},
  pages={xviii + 880 + 61 (appendix)},
  place={Boston},
}

\bib{kohlenbach3}{book}{
  author={Kohlenbach, Ulrich},
  title={Applied proof theory: proof interpretations and their use in mathematics},
  series={Springer Monographs in Mathematics},
  publisher={Springer-Verlag},
  place={Berlin},
  date={2008},
  pages={xx+532},
}

\bib{kohlenbach2}{article}{
  author={Kohlenbach, Ulrich},
  title={Higher order reverse mathematics},
  conference={ title={Reverse mathematics 2001}, },
  book={ series={Lect. Notes Log.}, volume={21}, publisher={ASL}, },
  date={2005},
  pages={281--295},
}

\bib{kohlenbach4}{article}{
  author={Kohlenbach, Ulrich},
  title={Foundational and mathematical uses of higher types},
  conference={ title={Reflections on the foundations of mathematics (Stanford, CA, 1998)}, },
  book={ series={Lect. Notes Log.}, volume={15}, publisher={ASL}, },
  date={2002},
  pages={92--116},
}

\bib{wownelly}{article}{
  author={Nelson, Edward},
  title={Internal set theory: a new approach to nonstandard analysis},
  journal={Bull. Amer. Math. Soc.},
  volume={83},
  date={1977},
  number={6},
  pages={1165--1198},
}

\bib{robinson1}{book}{
  author={Robinson, Abraham},
  title={Non-standard analysis},
  publisher={North-Holland},
  place={Amsterdam},
  date={1966},
  pages={xi+293},
}

\bib{samGH}{article}{
  author={Sanders, Sam},
  title={The Gandy-Hyland functional and a hitherto unknown computational aspect of Nonstandard Analysis},
  year={2015},
  journal={Submitted, Available from: \url {http://arxiv.org/abs/1502.03622}},
}

\bib{sambon}{article}{
  author={Sanders, Sam},
  title={The unreasonable effectiveness of Nonstandard Analysis},
  year={2015},
  journal={Submitted; Available from arXiv: \url {http://arxiv.org/abs/1508.07434}},
}

\bib{moooore}{article}{
  author={Sanders, Sam},
  title={On the connection between Nonstandard Analysis and classical computability theory},
  year={2016},
  journal={In preparation},
}

\bib{samzoo}{article}{
  author={Sanders, Sam},
  title={The taming of the Reverse Mathematics zoo},
  year={2015},
  journal={Submitted, \url {http://arxiv.org/abs/1412.2022}},
}

\bib{samzooII}{article}{
  author={Sanders, Sam},
  title={The refining of the taming of the Reverse Mathematics zoo},
  year={2016},
  journal={To appear in Notre Dame Journal for Formal Logic, \url {http://arxiv.org/abs/1602.02270}},
}

\bib{simpson1}{collection}{
  title={Reverse mathematics 2001},
  series={Lecture Notes in Logic},
  volume={21},
  editor={Simpson, Stephen G.},
  publisher={ASL},
  place={La Jolla, CA},
  date={2005},
  pages={x+401},
}

\bib{simpson2}{book}{
  author={Simpson, Stephen G.},
  title={Subsystems of second order arithmetic},
  series={Perspectives in Logic},
  edition={2},
  publisher={CUP},
  date={2009},
  pages={xvi+444},
}

\bib{troelstra1}{book}{
  author={Troelstra, Anne Sjerp},
  title={Metamathematical investigation of intuitionistic arithmetic and analysis},
  note={Lecture Notes in Mathematics, Vol.\ 344},
  publisher={Springer Berlin},
  date={1973},
  pages={xv+485},
}

\bib{TAMC}{book}{
  title={Theory and Applications of Models of Computation (TAMC)},
  date={April 2017},
  note={Proceedings in Springer LNCS; \url {http://tamc2017.unibe.ch/}},
}

\end{biblist}
\bye